\DeclareMathOperator{\co}{Co}                               
\DeclareMathOperator{\diag}{diag}
\newtheorem{theorem}{Theorem}[section]
\newtheorem{proposition}[theorem]{Proposition}
\newtheorem{lemma}[theorem]{Lemma}
\newtheorem{corollary}[theorem]{Corollary}
\theoremstyle{definition}           
\newtheorem{remark}[theorem]{Remark}
\newtheorem{example}[theorem]{Example}
\begin{document}
% Meta-data
\title{Matrices whose field of values is inscribed in a polygon}\thanks{Supported by NSF Award \href{https://www.nsf.gov/awardsearch/showAward?AWD_ID=2150511}{DMS-2150511}.}

\author[M.~J.~Fyfe]{Matthew J.~Fyfe}
\address{Bowling Green State University, Bowling Green, OH, U.S.A.}

\author[Y.~Hernandez]{Yesenia Hernandez}
\address{Bryn Mawr College, Bryan Mawr, PA, U.S.A.}

\author[P.~Paparella]{Pietro Paparella}
\address{Division of Engineering \& Mathematics, University of Washington Bothell, Bothell, WA, U.S.A.}
\email{pietrop@uw.edu}

\author[M.~Rajbhandari]{Malini Rajbhandari}
\address{Bryn Mawr College, Bryan Mawr, PA, U.S.A.}

\keywords{convexoid matrix, field of values, numerical range, polygon, principal submatrix}

\subjclass[2020]{Primary 15A60}

%---------------
\begin{abstract} 
In this work, it is shown that if $A$ is an $n$-by-$n$ \emph{convexoid} matrix (i.e., its field of values coincides with the convex hull of its eigenvalues), then the field of any $(n-1)$-by-$(n-1)$ principal submatrix of $A$ is inscribed in the field of $A$, i.e., the field is tangent to every side of the polygon corresondping to the boundary of the field of $A$. This result generalizes a special case established by Johnson and Paparella [Amer.~Math.~Monthly 127 (2020), no.~1,45–53]. 
\end{abstract}

\maketitle

%---------------------
\section{Introduction}

The \emph{field of values} (or \emph{numerical range}) of a matrix $A$ is the image of the two-norm unit-sphere in complex Euclidean space with respect to the map $x \longmapsto x^\ast A x$.   

Recently, Johnson and Paparella \cite{jp2020} used various concepts from matrix analysis, including the \emph{discrete Fourier transform matrix}, the field of values, \emph{trace vectors}, and \emph{differentiators}, to provide a framework that admits short proofs of the Gauss--Lucas and B\^{o}cher--Grace--Marden theorems (the latter is often simply referred to as \emph{Marden's theorem}), which are classical results in the geometry of polynomials. 

In particular, and germane to what follows, Johnson and Paparella \cite[pp.~5--6]{jp2020} proved that if $A = FDF^\ast$, where $D = \diag{(\lambda_1,\ldots,\lambda_n)}$ and $F$ is the $n$-by-$n$ discrete Fourier transform matrix, then the principal submatrix $F(A_{(1)})$, obtained by deleting the first-row and first column of $A$, is tangent to the midpoints of every side of the polygon $\partial F(A) = \partial \co{(\lambda_1,\ldots,\lambda_n)}$. 

In this work, this result is generalized to the fullest extent possible---in particular, it is shown that if $A$ is \emph{convexoid}, i.e., $F(A)$ coincides with the convex hull of its eigenvalues, then $F(A_{(k)})$ is inscribed in the polygon $\partial F(A)$, $\forall k \in \{1,\ldots,n\}$.  

%--------------------------------
\section{Notation and Background}

The set of $m$-by-$n$ matrices with entries over $\mathbb{C}$ is denoted by $\mathsf{M}_{m \times n}(\mathbb{C})$; when $m = n$, $\mathsf{M}_{n \times n}(\mathbb{C})$ is abbreviated to $\mathsf{M}_n$. The set of all $n$-by-$1$ column vectors is identified with the set of all ordered $n$-tuples with entries in $\mathbb{C}$ and thus denoted by $\mathbb{C}^n$. If $x \in \mathbb{C}^n$, then $x_i$ denotes the $i$th entry of $x$. The $n$-by-$n$ identity matrix is denoted by $I = I_n$ and $e_k$ denotes the $k$th column of $I$.

Given $A \in \mathsf{M}_n$, we let 
\begin{itemize}
    \item \( \sigma(A) \) denote the \emph{spectrum} (i.e., multiset of eigenvalues) of \(A\);
    \item \(A_{(k)}\) denote the \((n-1)\)-by-\((n-1)\) \emph{principal submatrix} obtained by deleting the \(k\){th} row and \(k\){th} column of \(A\)); and
    \item $A^\ast$ denotes the \emph{conjugate transpose of $A$}.
\end{itemize}

If $A \in \mathsf{M}_n$ and $B \in \mathsf{M}_m$, then the \emph{direct sum of $A$ and $B$}, denoted by $A \oplus B$, is defined by 
\[ A \oplus B =\begin{bmatrix} A & 0 \\ 0 & B \end{bmatrix}. \]

If $U \in \mathsf{M}_n$, then $U$ is called \emph{unitary} if $U^* U = I$. If $A \in \mathsf{M}_n$, then $A$ is called \emph{normal} if $A^* A = A A^*$. A matrix $A$ is {normal} if and only if there is a unitary matrix $U$ and a diagonal matrix $D$ such that $A = U D U^*$ \cite[Theorem 2.5.3(b)]{hj2013}.

The \emph{field (of values)} or \emph{numerical range of $A \in \mathsf{M}_n)$}, denoted by $F(A)$, is defined by \( F(A) = \left\{ x^*A x \mid x^*x = 1 \right\} \subseteq \mathbb{C} \). A general reference for the field is \cite[Chapter 1]{hj1994}.

If $S = \{ \lambda_1,\ldots,\lambda_n \} \subset \mathbb{C}$ (repetitions allowed), then the \emph{convex hull} of $S$ is denoted by $\co S = \co{(S)}$.

The following well-known properties will be useful in the sequel: 

%------------------
\begin{proposition} 
    \thlabel{fvprops}
        If $A = [a_{ij}] \in \mathsf{M}_n$ and $B \in \mathsf{M}_m$, then:
     \begin{enumerate}
        [label=(\roman*)]
            % \item $F(A)$ is compact \cite[Property 1.2.1]{hj1994}; 
            \item \label{nrspectrum} \( \sigma(A) \subseteq F(A) \) \cite[Property 1.2.6]{hj1994}; 
            \item \label{nrnormal} \( F(A) = \co{(\sigma(A))} \), whenever \(A\) is {normal} \cite[Property 1.2.9]{hj1994}; 
            \item \thlabel{ds} $F ( A\oplus B ) = \co(F(A) \cup F(B))$ \cite[Property 1.2.10]{hj1994};  
            \item \label{nrsubmatrix} \( F(A_{(k)}) \subseteq F(A) \), $\forall  k \in \{1,\ldots, n \}$ \cite[Property 1.2.11]{hj1994}; and
            \item \label{toephaus}\(F(A)\) is convex \cite[\S 1.3]{hj1994}.
    \end{enumerate}
\end{proposition}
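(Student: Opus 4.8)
The plan is to dispatch the five items in the stated order: parts (i)--(iv) each reduce to a one-line computation or an embedding of a lower-dimensional unit sphere, while part (v) is the Toeplitz--Hausdorff theorem, where essentially all of the genuine work resides. For (i), if $\lambda \in \sigma(A)$ has a unit eigenvector $x$, then $x^\ast A x = \lambda$, so $\lambda \in F(A)$. For (iv), given a unit $y \in \mathbb{C}^{n-1}$, inserting a $0$ in the $k$th coordinate yields a unit $x \in \mathbb{C}^n$ with $x^\ast A x = y^\ast A_{(k)} y$; the same embedding applied to the diagonal blocks of $A \oplus B$ gives the inclusion $F(A) \cup F(B) \subseteq F(A \oplus B)$ that feeds into (iii).

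For (ii), I would first record unitary invariance --- since $x \mapsto Ux$ is a bijection of the unit sphere, $F(U^\ast A U) = F(A)$ --- and then, via the spectral theorem, reduce a normal $A$ to a diagonal $D = \diag(\lambda_1,\dots,\lambda_n)$. For unit $x$ one has $x^\ast D x = \sum_i |x_i|^2 \lambda_i$, a convex combination of the $\lambda_i$ since $\sum_i |x_i|^2 = 1$; conversely $\sum_i t_i \lambda_i$ is attained at $x_i = \sqrt{t_i}$, so $F(D) = \co(\sigma(A))$. For (iii), a unit vector $z \in \mathbb{C}^{n+m}$ splits as $z = (\sqrt{s}\,u,\, \sqrt{1-s}\,v)$ with $s = \|x\|^2 \in [0,1]$ and $u,v$ unit, so $z^\ast (A \oplus B) z = s\,(u^\ast A u) + (1-s)(v^\ast B v)$; running this identity in both directions and invoking the convexity of $F(A)$ and $F(B)$ from (v) identifies $F(A \oplus B)$ with $\co(F(A) \cup F(B))$.

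The substantive item is (v). It suffices to show the segment joining any $p, q \in F(A)$ lies in $F(A)$, and the covariance $F(\alpha A + \beta I) = \alpha F(A) + \beta$ (immediate from $x^\ast(\alpha A + \beta I)x = \alpha x^\ast A x + \beta$) lets me normalize to $p = 0$, $q = 1$. Picking unit vectors $u, v$ with $u^\ast A u = 0$ and $v^\ast A v = 1$ and taking the columns of $W$ to be an orthonormal basis of $\operatorname{span}\{u,v\}$, I get $\{\, x^\ast A x : x \in \operatorname{span}\{u,v\},\ \|x\| = 1 \,\} = F(W^\ast A W)$, a set containing both $0$ and $1$, so the problem reduces to the convexity of the field of a matrix of order at most two. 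The hard part will be exactly this last step --- proving the elliptical range theorem, or equivalently carrying out the parametrization $x = (\cos\theta,\ e^{i\phi}\sin\theta)$ and checking that the image is a (possibly degenerate) ellipse, hence convex; this is the one place where the argument is not a formality.
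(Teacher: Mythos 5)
Your overall strategy is sound, but note that the paper itself proves only part \eqref{nrsubmatrix} of this proposition and cites the remaining four items to Horn and Johnson; most of your proposal therefore supplies arguments the authors deliberately outsource. On the one part where a direct comparison is possible, your approach coincides with theirs: you embed a unit vector $y\in\mathbb{C}^{n-1}$ into $\mathbb{C}^{n}$ by inserting a zero in the $k$th slot, while the paper does the same thing in the more general form $A[\alpha]=P^{\top}AP$ with $P=\begin{bmatrix} e_{\alpha_1}&\cdots&e_{\alpha_m}\end{bmatrix}$, which handles an arbitrary principal submatrix and sets up machinery (the matrix $P$ and the accompanying projection idea) that is reused in the proof of the main theorem. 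Your arguments for (i), (ii), and (iii) are correct and standard; in (iii) you rightly flag that identifying $\{s\,a+(1-s)\,b : a\in F(A),\, b\in F(B),\, s\in[0,1]\}$ with $\co(F(A)\cup F(B))$ leans on the convexity of $F(A)$ and $F(B)$, i.e., on (v), and you should also dispose of the degenerate splittings $s=0,1$, where one of $u,v$ is undefined (trivial, but worth a sentence).

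The one genuine hole is the final step of (v). Your reduction is airtight---the covariance $F(\alpha A+\beta I)=\alpha F(A)+\beta$ normalizes the two endpoints to $0$ and $1$, the unit vectors $u,v$ are automatically linearly independent (if $v=cu$ with $|c|=1$, then $v^{\ast}Av=u^{\ast}Au$, contradicting $0\neq 1$), and the compression $W^{\ast}AW$ lands the problem in $\mathsf{M}_2$---but the $2$-by-$2$ case is precisely the content of the Toeplitz--Hausdorff theorem, and ``check that the image is a (possibly degenerate) ellipse'' is a statement of the elliptical range theorem, not a proof of it. To close the gap you need an actual argument, e.g.: write $B=H+iK$ with $H,K$ Hermitian; since the field values $0$ and $1$ are real, $u^{\ast}Ku=v^{\ast}Kv=0$; replace $v$ by $e^{i\phi}v$ with $\phi$ chosen so that $e^{i\phi}u^{\ast}Kv$ is purely imaginary, which forces $x(\theta)^{\ast}Kx(\theta)=2\cos\theta\sin\theta\,\Re\bigl(e^{i\phi}u^{\ast}Kv\bigr)=0$ for every $x(\theta)=\cos\theta\,u+\sin\theta\,e^{i\phi}v$; then $\theta\mapsto x(\theta)^{\ast}Bx(\theta)/x(\theta)^{\ast}x(\theta)$ is a continuous real-valued path from $0$ to $1$, and the intermediate value theorem finishes. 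As written, your proposal correctly locates where the work is but does not do it.
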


\begin{proof}
For completeness, we give a proof of Property 1.2.11 \cite[p.~13]{hj1994}, which generalizes part \ref{nrsubmatrix}, given that it is ubiquitous in the literature; the proof-strategy suggested by Horn and Johnson is tedious; and ideas presented in the demonstration will be used in the sequel. 

To this end, let $\alpha = \{ \alpha_1, \dots, \alpha_m \}$ be a nonempty subset of $\{1, \dots, n\}$ (if $\alpha = \emptyset$, then $F(A[\alpha]) = \emptyset \subseteq F(A)$) and denote by $A[\alpha]$ the $m$-by-$m$ matrix whose $(i,j)$ entry is $a_{\alpha_i, \alpha_j}$, $1 \le i,j \le m$. If $P:= \begin{bmatrix} e_{\alpha_1} & \cdots & e_{\alpha_m}\end{bmatrix}$, then
\begin{align*}
    a_{\alpha_i, \alpha_j}
    = e_{\alpha_i}^\top A e_{\alpha_j}
    = \left[ P^\top A P\right]_{ij},
\end{align*}
i.e., $A[\alpha] = P^\top A P$.

If $z \in F(A[\alpha])$, then $z = x^*F(A[\alpha])x$, where $x^* x = 1$. If $y:=Px$, then $y^*y=x^*P^\top Px = x^*I_m x= x^*x=1$. Furthermore,
\[z = x^* A[\alpha] x = x^* P^\top A P x=(Px)^*APx = y^*Ay \in F(A), \] 
i.e, $F(A[\alpha]) \subseteq F(A)$.
\end{proof}

If $A \in \mathsf{M}_n(\mathbb{C})$, then $A$ is called \emph{convexoid} if $F(A) = \co(\sigma(A))$. Johnson \cite[Theorem 3]{j1976} gave the following characterization of convexoid matrices.

%--------------
\begin{theorem}
    \thlabel{convexoid}
        If $A \in \mathsf{M}_n(\mathbb{C})$, then $A$ is convexoid if and only if $A$ is normal or there is a unitary matrix $U$ such that 
        \[ U^\ast A U = \begin{bmatrix} A_1 & 0 \\ 0 & A_2 \end{bmatrix},\]
        where $A_1$ is normal and $F(A_2) \subseteq F(A_1)$.
\end{theorem}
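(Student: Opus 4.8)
The plan is to prove the two implications separately; the reverse implication is routine, and the forward implication rests on a single geometric lemma that holds for every $A \in \mathsf{M}_n$.

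For the reverse implication, I would first note that if $A$ is normal then $F(A) = \co(\sigma(A))$ by \thref{fvprops}, so $A$ is convexoid. If instead $U^\ast A U = A_1 \oplus A_2$ with $A_1$ normal and $F(A_2) \subseteq F(A_1)$, then unitary similarity preserves both the spectrum and the field, and \thref{fvprops} gives $F(A) = \co(F(A_1) \cup F(A_2)) = F(A_1) = \co(\sigma(A_1))$; since $\sigma(A_2) \subseteq F(A_2) \subseteq \co(\sigma(A_1))$, adjoining $\sigma(A_2)$ to $\sigma(A_1)$ leaves the convex hull unchanged, so $\co(\sigma(A)) = \co(\sigma(A_1)) = F(A)$ and $A$ is convexoid.

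For the forward implication, suppose $A$ is convexoid but not normal. Call $x$ a \emph{reducing eigenvector} of $A$ (with eigenvalue $\mu$) if $Ax = \mu x$ and $A^\ast x = \bar{\mu} x$, i.e., $x$ is a common eigenvector of $A$ and $A^\ast$. Let $\mathcal{R} \subseteq \mathbb{C}^n$ be the span of all reducing eigenvectors. Reducing eigenvectors for distinct eigenvalues are mutually orthogonal, and each space $\ker(A - \mu I) \cap \ker(A^\ast - \bar{\mu} I)$ is invariant under both $A$ and $A^\ast$; hence $\mathcal{R}$ reduces $A$, as does $\mathcal{R}^\perp$, and with respect to $\mathbb{C}^n = \mathcal{R} \oplus \mathcal{R}^\perp$ there is a unitary $U$ with $U^\ast A U = A_1 \oplus A_2$, where $A_1 := A|_{\mathcal{R}}$ is block-diagonal (hence normal) and $A_2 := A|_{\mathcal{R}^\perp}$. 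Since $F(A) = \co(F(A_1) \cup F(A_2))$ by \thref{fvprops}, to obtain $F(A_2) \subseteq F(A_1)$ it suffices to show $F(A_1) = F(A)$; and since $A_1$ is normal with $\sigma(A_1) \subseteq \sigma(A)$, this reduces in turn to showing that every vertex of the polygon $F(A) = \co(\sigma(A))$ belongs to $\sigma(A_1)$, i.e., is a reducing eigenvalue.

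The crux is therefore the lemma: every corner of $F(A)$ — every boundary point admitting two supporting lines with distinct directions, in particular every vertex of a polygon — is a reducing eigenvalue. To prove it, write $A = R + iS$ with $R = (A + A^\ast)/2$ and $S = (A - A^\ast)/(2i)$ Hermitian, and invoke the standard identity $\max_{z \in F(A)} \operatorname{Re}(e^{-i\theta} z) = \lambda_{\max}(\cos\theta\, R + \sin\theta\, S)$. Given a corner $\mu$, pick a unit vector $x$ with $x^\ast A x = \mu$ and two supporting directions $\theta_1 \ne \theta_2$ at $\mu$ with $\sin(\theta_2 - \theta_1) \ne 0$ (a polygon vertex, including the endpoint of a degenerate segment, supplies a whole arc of supporting directions, so such a pair exists). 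For $j = 1,2$ one gets $x^\ast(\cos\theta_j\, R + \sin\theta_j\, S)x = \operatorname{Re}(e^{-i\theta_j}\mu) = \lambda_{\max}(\cos\theta_j\, R + \sin\theta_j\, S)$, so $x$ lies in the top eigenspace of that Hermitian matrix, i.e., $(\cos\theta_j\, R + \sin\theta_j\, S)x = c_j x$. Since the $2 \times 2$ coefficient matrix with rows $(\cos\theta_1, \sin\theta_1)$ and $(\cos\theta_2, \sin\theta_2)$ has determinant $\sin(\theta_2 - \theta_1) \ne 0$, we may solve to obtain $Rx = \alpha x$ and $Sx = \beta x$ with $\alpha, \beta \in \mathbb{R}$, whence $Ax = (\alpha + i\beta)x$ and $A^\ast x = (\alpha - i\beta)x$; comparison with $x^\ast A x = \mu$ forces $\alpha + i\beta = \mu$, so $x$ is a reducing eigenvector for $\mu$. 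Applying this to each vertex of $F(A)$ shows that $\co(\sigma(A_1))$ contains every vertex of $F(A)$, hence equals $\co(\sigma(A)) = F(A)$, so $F(A_1) = \co(\sigma(A_1)) = F(A)$ and $F(A_2) \subseteq F(A) = F(A_1)$; both blocks are genuinely present because $A$ is not normal and $F(A)$ has at least one vertex. I expect the main obstacle to be precisely this lemma: one must be careful about the convex geometry of corners (that a polygon vertex, including a segment endpoint, really does furnish two supporting directions with $\sin(\theta_2 - \theta_1) \ne 0$) and about justifying the identity $\max_{z \in F(A)} \operatorname{Re}(e^{-i\theta} z) = \lambda_{\max}(\operatorname{Re}(e^{-i\theta} A))$ together with the passage from ``$x$ attains the maximum of a Hermitian quadratic form'' to ``$x$ is a top eigenvector of it.''
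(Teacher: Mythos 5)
Your argument is correct, but note that the paper does not prove \thref{convexoid} at all: it is quoted verbatim from Johnson [Theorem 3, \emph{j1976}], so there is no in-paper proof to compare against. What you have supplied is essentially the classical route to that characterization. The reverse implication is exactly the routine computation one would expect (unitary invariance of $\sigma$ and $F$, plus \thref{fvprops}). The forward implication rests on Donoghue's theorem that every sharp point of $F(A)$ is a \emph{reducing} eigenvalue, which you prove correctly via the identity $\max_{z\in F(A)}\mathrm{Re}(e^{-i\theta}z)=\lambda_{\max}(\cos\theta\,R+\sin\theta\,S)$ and the invertibility of the $2\times 2$ system coming from two supporting directions with $\sin(\theta_2-\theta_1)\ne 0$; the passage from ``$x$ attains the maximum of the Hermitian form'' to ``$x$ is a top eigenvector'' is standard (expand $x$ in an orthonormal eigenbasis). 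Your construction of $\mathcal{R}$ as the span of all reducing eigenvectors is also sound: the spaces $\ker(A-\mu I)\cap\ker(A^\ast-\bar\mu I)$ are mutually orthogonal and jointly $A$- and $A^\ast$-invariant, so $A_1=A|_{\mathcal{R}}$ is unitarily diagonalizable (hence normal), $\mathcal{R}^\perp$ is likewise reducing, and $\mathcal{R}\ne\{0\}\ne\mathcal{R}^\perp$ precisely when $A$ is convexoid but not normal. One small point worth making explicit if you write this up: at a vertex of the (possibly degenerate) polygon $\co(\sigma(A))$ the normal cone is an arc of angular width strictly greater than $0$ and at most $\pi$ (width exactly $\pi$ only for an endpoint of a segment, in which case $A$ is in fact already normal), so two admissible directions with $\theta_2-\theta_1\not\equiv 0\pmod{\pi}$ always exist. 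With that detail in place the proof is complete and self-contained, which is arguably an improvement over the paper's bare citation.
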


By \thref{fvprops}\eqref{nrnormal}, if $A$ is normal and $\lambda_1,\ldots,\lambda_n$ are its eigenvalues (repetitions included), then $F(A) = \co{(\lambda_1,\ldots,\lambda_n)}$. Without loss of generality, we may label the vertices as $\lambda_1,\ldots, \lambda_d$, $1 \le d \le n$. Notice that 
\[\partial F(A) = \bigcup_{k=1}^d \co{(\lambda_k,\lambda_{k+1})}, \] 
where, for convenience, $d + 1 := 1$. We say that \emph{$F(A_{(k)})$ is inscribed in $F(A)$} if $F(A_{(k)}) \cap \co{(\lambda_k,\lambda_{k+1})} \ne \emptyset$, $\forall k \in \{1,\ldots,d\}$. 

%--------------------
\section{Main Result}

%------------
\begin{lemma}
    \thlabel{rotevec}
        Let $Av= \lambda v$,  where $v \neq 0$ and $ v_k \neq 0$. If $v_k = r\exp(i \theta)$, where $\theta \in (-\pi,\pi]$, then $w:= \exp(-i\theta)v$ is an eigenvector such that $\begin{Vmatrix} v \end{Vmatrix}_2 = \begin{Vmatrix} w \end{Vmatrix}_2$ and $w_k > 0$.
\end{lemma}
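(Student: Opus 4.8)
The plan is to verify the three claimed properties of $w := \exp(-i\theta)v$ directly, since the statement is essentially a normalization trick: multiplying an eigenvector by a unit-modulus scalar preserves both the eigenvector property and the Euclidean norm, and one can choose the scalar precisely so that a prescribed nonzero coordinate becomes a positive real number.

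First I would note that $\exp(-i\theta) \neq 0$, so $w \neq 0$, and $Aw = A(\exp(-i\theta)v) = \exp(-i\theta)Av = \exp(-i\theta)\lambda v = \lambda (\exp(-i\theta)v) = \lambda w$; hence $w$ is an eigenvector of $A$ associated with $\lambda$. Next, since $|\exp(-i\theta)| = 1$, we have $\norm{w}_2 = \norm{\exp(-i\theta) v}_2 = |\exp(-i\theta)| \cdot \norm{v}_2 = \norm{v}_2$. Finally, for the $k$th coordinate, $w_k = \exp(-i\theta) v_k = \exp(-i\theta) \cdot r\exp(i\theta) = r$; because $v_k \neq 0$ we have $r = |v_k| > 0$, so $w_k > 0$.

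There is no real obstacle here: every step is a one-line computation, and the only thing to be careful about is bookkeeping with the polar form, namely that writing $v_k = r\exp(i\theta)$ with $\theta \in (-\pi,\pi]$ forces $r = |v_k|$ to be strictly positive (rather than merely nonnegative) precisely because $v_k \neq 0$. The hypothesis $v \neq 0$ is used only to guarantee that $v$ (and hence $w$) is genuinely an eigenvector rather than the trivial vector, and the hypothesis $v_k \neq 0$ is what makes the sign normalization meaningful. The whole lemma is a convenience result that will presumably be invoked later to assume, without loss of generality, that an eigenvector corresponding to a vertex of $F(A)$ has a chosen positive coordinate.
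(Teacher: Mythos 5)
Your proof is correct and follows the same route as the paper's: the eigenvector property and norm preservation come from $|\exp(-i\theta)| = 1$, and the computation $w_k = \exp(-i\theta)\, r\exp(i\theta) = r > 0$ handles the last claim. You simply spell out a few steps the paper leaves implicit, such as why $r = |v_k|$ is strictly positive.
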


\begin{proof}
    The conclusion that $w$ is an eigenvector with the same length as $v$ follows from the fact that $\vert \exp(-i\theta) \vert = 1$. Lastly, notice that $w_k = \exp(-i\theta) (r\exp(i \theta)) = r > 0$.
\end{proof}

%------------
\begin{lemma}
    \thlabel{projection}
        Let $\alpha = \{ \alpha_1, \dots, \alpha_m \}$ be a nonempty subset of $\{1, \dots, n\}$, let $P:= \begin{bmatrix} e_{\alpha_1} & \cdots & e_{\alpha_m}\end{bmatrix}$, and let $y \in \mathbb{C}^n$ be any vector such that $y_k  = 0$ whenever $k \notin \alpha$. If $x:= P^\top y \in \mathbb{C}^m$, then $y = Px$ and $y^\ast y = x^\ast x$.
\end{lemma}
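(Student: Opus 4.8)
The plan is to reduce everything to two elementary facts about the $0$--$1$ matrix $P$: that $P^\top P = I_m$ and that $Q := P P^\top$ is the $n$-by-$n$ diagonal matrix whose $(k,k)$ entry equals $1$ precisely when $k \in \alpha$ and is $0$ otherwise. The first identity was already observed in the proof of \thref{fvprops}: the $(i,j)$ entry of $P^\top P$ is $e_{\alpha_i}^\top e_{\alpha_j} = \delta_{\alpha_i \alpha_j} = \delta_{ij}$, since the $\alpha_i$ are distinct. The second follows from writing $P = \sum_{i=1}^m e_{\alpha_i} e_i^\top$ (the first standard basis vectors living in $\mathbb{C}^n$, the second in $\mathbb{C}^m$), so that $P P^\top = \sum_{i=1}^m e_{\alpha_i} e_{\alpha_i}^\top = \sum_{k \in \alpha} e_k e_k^\top$; multiplication by this matrix zeroes out every entry indexed outside $\alpha$ and leaves the remaining entries unchanged.

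First I would establish $y = Px$. By definition $x = P^\top y$, hence $Px = P P^\top y = Q y$. Because $y_k = 0$ for every $k \notin \alpha$ by hypothesis, the vector $y$ is fixed by the coordinate projection $Q$, so $Qy = y$ and therefore $y = Px$.

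Next I would compute the norms. Since $P$ has real entries, $P^\ast = P^\top$, and so, using $y = Px$ together with $P^\top P = I_m$,
\[ y^\ast y = (Px)^\ast (Px) = x^\ast P^\top P x = x^\ast I_m x = x^\ast x. \]
(Alternatively, one can argue entrywise: $x^\ast x = \sum_{i=1}^m \lvert y_{\alpha_i} \rvert^2 = \sum_{k \in \alpha} \lvert y_k \rvert^2 = \sum_{k=1}^n \lvert y_k \rvert^2 = y^\ast y$, where the third equality again uses that $y$ is supported on $\alpha$.)

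I do not expect a genuine obstacle here; the statement is a routine bookkeeping exercise. The only point requiring a modicum of care is that $Q = PP^\top$ is \emph{not} the identity on all of $\mathbb{C}^n$ unless $\alpha = \{1,\dots,n\}$, so the support hypothesis on $y$ is exactly what is needed to conclude $Qy = y$; and one must keep track of which ambient space each standard basis vector inhabits when expanding $P$.
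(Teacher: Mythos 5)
Your proposal is correct and follows essentially the same route as the paper: both arguments identify $PP^\top = \sum_{k\in\alpha} e_k e_k^\top$ as the coordinate projection onto the indices in $\alpha$, use the support hypothesis on $y$ to conclude $Px = PP^\top y = y$, and then compute $y^\ast y = x^\ast P^\top P x = x^\ast x$ via $P^\top P = I_m$. No issues to report.
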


\begin{proof}
    Since $PP^\top = \sum_{k=1}^m e_{\alpha_k}e_{\alpha_k}^\top$, it follows that 
    \[ Px = P(P^\top y) = (P P^\top) y = \left( \sum_{k=1}^m e_{\alpha_k}e_{\alpha_k}^\top \right) y = \sum_{k=1}^m e_{\alpha_k} y_{\alpha_k}. \]
    If $z := \sum_{k=1}^m e_{\alpha_k}$, then $z_k = 0$ whenever $k \notin \alpha$. Thus, $y = Px$ and 
    \[ y^\ast y = (Px)^\ast (Px) = x^\ast P^\top P x = x^\ast I_m x = x^\ast x, \]
    as desired.
\end{proof}

%--------------
\begin{theorem}
    \thlabel{main}
        If $A$ is convexoid, then $F(A_{(k)})$ is inscribed in the polygon $\partial F(A)$, $\forall k \in \{1,\ldots,n\}$.
\end{theorem}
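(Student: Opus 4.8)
The plan is to produce, for each deleted index $k$ and each edge $\co(\lambda_\ell,\lambda_{\ell+1})$ of the polygon $\partial F(A)$, a single unit vector $y \in \mathbb{C}^n$ with $y_k = 0$ and $y^\ast A y \in \co(\lambda_\ell,\lambda_{\ell+1})$. Granting this, \thref{projection}, applied with the index set $\{1,\dots,n\}\setminus\{k\}$ and the associated matrix $P$ (so that $A_{(k)} = P^\top A P$, exactly as in the proof of \thref{fvprops}), yields a unit vector $x := P^\top y \in \mathbb{C}^{n-1}$ with $x^\ast A_{(k)} x = (Px)^\ast A (Px) = y^\ast A y$; hence $F(A_{(k)})$ meets the edge, which is precisely what ``inscribed'' demands.

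I would first dispose of the trivial case $d = 1$: then $F(A) = \{\lambda_1\}$ is a single point, which forces $A = \lambda_1 I_n$ (directly by polarization, or from \thref{convexoid}, since then $A_1$ and $A_2$ are scalar), so $A_{(k)} = \lambda_1 I_{n-1}$, $F(A_{(k)}) = \{\lambda_1\} = \co(\lambda_1,\lambda_2)$, and we are done. Assume henceforth $d \ge 2$, so that any two consecutive vertices $\lambda_\ell,\lambda_{\ell+1}$ are distinct.

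The crux is to locate orthonormal eigenvectors of $A$ sitting at the vertices of $\partial F(A)$. By \thref{convexoid} there is a unitary $U$ with $U^\ast A U = A_1$ (when $A$ is normal) or $U^\ast A U = A_1 \oplus A_2$ with $A_1$ normal and $F(A_2) \subseteq F(A_1)$; in either case, since unitary similarity preserves the field and $F(A_1)$ is convex, one gets $F(A) = F(A_1) = \co(\sigma(A_1))$ from \thref{fvprops} and the hypothesis $F(A_2) \subseteq F(A_1)$. Thus every vertex $\lambda_\ell$ of $\partial F(A)$ is an extreme point of $\co(\sigma(A_1))$, hence an eigenvalue of the \emph{normal} matrix $A_1$. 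Because $A_1$ is normal I may choose orthonormal $u_1,\dots,u_d$ with $A_1 u_\ell = \lambda_\ell u_\ell$; padding each $u_\ell$ with zeros to a vector $\tilde u_\ell \in \mathbb{C}^n$ and setting $v_\ell := U\tilde u_\ell$ produces orthonormal $v_1,\dots,v_d \in \mathbb{C}^n$ with $A v_\ell = \lambda_\ell v_\ell$. (One can also bypass \thref{convexoid} here: if $\lambda_\ell \in \partial F(A)$ and $Av = \lambda_\ell v$, then picking a supporting line of $F(A)$ at $\lambda_\ell$ and inspecting the corresponding Hermitian part $\tfrac12(e^{i\theta}A + e^{-i\theta}A^\ast)$ forces $A^\ast v = \overline{\lambda_\ell}\,v$, after which comparing the two evaluations of $v^\ast A v_{\ell'}$ shows eigenvectors at distinct vertices are orthogonal.)

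It then remains to run the elementary linear algebra. Fix $k$ and an edge $\co(\lambda_\ell,\lambda_{\ell+1})$. The single homogeneous equation $\alpha (v_\ell)_k + \beta (v_{\ell+1})_k = 0$ in $(\alpha,\beta)\in\mathbb{C}^2$ has a nonzero solution; rescale it so that $|\alpha|^2+|\beta|^2 = 1$ and set $y := \alpha v_\ell + \beta v_{\ell+1}$. Then $y_k = 0$ by construction, and since $v_\ell, v_{\ell+1}$ are orthonormal, $y^\ast y = |\alpha|^2 + |\beta|^2 = 1$ while $y^\ast A y = |\alpha|^2\lambda_\ell + |\beta|^2\lambda_{\ell+1} \in \co(\lambda_\ell,\lambda_{\ell+1})$, as needed; feeding this $y$ into \thref{projection} completes the argument. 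The only step with real content is the one immediately preceding this calculation — extracting an orthonormal eigenbasis at the vertices — and that is exactly where convexoidness enters, via \thref{convexoid} (or the supporting-line argument): without it, distinct vertices of $\partial F(A)$ that happen to be eigenvalues need not admit orthogonal eigenvectors, and the convex-combination identity for $y^\ast A y$ collapses.
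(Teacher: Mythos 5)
Your proposal is correct and follows essentially the same route as the paper: locate orthonormal eigenvectors of $A$ at two adjacent vertices, form the unit combination $\alpha v_\ell + \beta v_{\ell+1}$ whose $k$th entry vanishes so that $x^\ast A x = |\alpha|^2\lambda_\ell + |\beta|^2\lambda_{\ell+1}$ lands on the edge, and pass to $A_{(k)}$ via the projection lemma. Your version is in fact slightly tidier in two respects: allowing complex $(\alpha,\beta)$ solving the single homogeneous equation absorbs the paper's separate case where $v_k=0$ or $w_k=0$, and you spell out the reduction to the normal block $A_1$ (vertices of $F(A)=F(A_1)=\co(\sigma(A_1))$ are eigenvalues of the normal summand, whose padded eigenvectors push forward through $U$), which the paper only asserts in one line.
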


\begin{proof}
In view of \thref{fvprops}\ref{ds} and \thref{convexoid}, it suffices to consider the case when $A$ is normal.

To this end, if $A$ is normal, then there is a diagonal matrix $D = \diag(\lambda_1,\dots,\lambda_n)$ and a unitary matrix $U$ such that $A = UDU^*$. 

Let $\lambda_i$ and $\lambda_j$ be adjacent vertices of the polygon $\partial \co(\lambda_1,\dots,\lambda_n)$, let $v := U e_i$, and let $w := Ue_j$. Note that $v^\ast v = w^\ast w = 1$ and $v^\ast w = w^\ast v = 0$. We distinguish the following cases:
\begin{enumerate}
\item $v_k = 0$ or $w_k =0$. If 
\begin{equation}
    \label{alphaindex}
    \gamma := \{1,\ldots,n \} \backslash \{k\} = \{ \gamma_1,\ldots, \gamma_{n-1} \},
\end{equation}
\begin{equation}
    \label{pmatrix}
    P := \begin{bmatrix} e_{\gamma_1} & \cdots & e_{\gamma_{n-1}} \end{bmatrix},
\end{equation} 
and $x := P^\top v$, then $v = Px$ and $x^\ast x = 1$ by \thref{projection}. Thus, 
\[ \lambda_i = v^\ast Av = (Px)^\ast A (Px) = x^\ast P^\top A P x = x^\ast A_{(k)} x \in F(A_{(k)}),  \]
i.e., $F(A_{(k)}) \cap \co{(\lambda_{i-1},\lambda_{i})} \ne \emptyset$ and $F(A_{(k)}) \cap \co{(\lambda_{i},\lambda_{i+1})} \ne \emptyset$ (if $i=1$, then $i-1 := d$). 

Similarly, if $w_k = 0$, then $\lambda_j \in F(A_{(k)})$. If $v_k = w_k = 0$, then the line-segment $\co{(\lambda_i,\lambda_j)} \subseteq F(A_{(k)})$ by \thref{fvprops}\ref{toephaus}. 
    
\item \textit{$v_k \ne 0$ and $w_k \ne 0$}. By \thref{rotevec}, it may be assumed, without loss of generality, that $v_k > 0$ and $w_k > 0$. If 
\begin{equation}
    \label{alphabeta}
        \alpha := \frac{\mp w_k}{\sqrt{v_k^2 + w_k^2}} \text{ and } \beta := \frac{\pm v_k}{\sqrt{v_k^2 + w_k^2}},
\end{equation}
then $\alpha$ and $\beta$ are nonzero reals such that \(\alpha^2 + \beta^2 = 1\). If $u := \alpha v + \beta w$, then 
\[ u_k = \alpha v_k + \beta w_k = \frac{\mp v_k w_k}{\sqrt{v_k^2 + w_k^2}} + \frac{\pm v_k w_k}{\sqrt{v_k^2 + w_k^2}} = 0 \]
and, since $v^\ast v = w^\ast w = 1$ and $v^\ast w = w^\ast v = 0$, it follows that
\[ u^\ast u = (\alpha v^\ast + \beta w^\ast)(\alpha v + \beta w) = \alpha^2v^\ast v + \alpha \beta v^\ast w + \beta \alpha w^\ast v + \beta^2 w^\ast w = \alpha^2 + \beta^2 = 1 \]
and 
\begin{align*} 
    u^\ast A u 
    &= (\alpha v^\ast + \beta w^\ast) (\alpha A v + \beta Aw)                                                   \\
    &= (\alpha v^* + \beta w^*)(\alpha \lambda_i v + \beta \lambda_j w)                                         \\
    &= \alpha^2 \lambda_i v^\ast v + \alpha \beta v^\ast w + \beta \alpha w^\ast v + \beta^2 \lambda_jw^\ast w  \\
    &= \alpha^2 \lambda_i + \beta^2 \lambda_j.
\end{align*}
Because $\alpha^2 + \beta^2 = 1$, $\alpha^2 > 0$, and $\beta^2 > 0$, it follows that $u^\ast A u$ lies in the interior of the line-segment $\co(\lambda_i,\lambda_j)$. Finally, if $x := P^\top u \in \mathbb{C}^{n-1}$, where $\gamma$ and $P$ are defined as in \eqref{alphaindex} and \eqref{pmatrix}, respectively, then $u = Px$ and $x^\ast x = 1$ (by \thref{projection}) and 
\[ \alpha^2 \lambda_i + \beta^2 \lambda_j = u^\ast A u = x^\ast P^\top A P x = x^\ast A_{(k)} x \in F(A_{(k)}). \]
Thus, $F(A_{(k)}) \cap \co{(\lambda_i,\lambda_j)} \ne \emptyset$.
\end{enumerate}
In all cases, $F(A_{(k)}) \cap \co{(\lambda_k,\lambda_{k+1})} \ne \emptyset$, $\forall k \in \{1,\ldots,d\}$.  
\end{proof}

%-------------
\begin{remark}
Although there are two possible choices for $\alpha$ and $\beta$, the convex combination $\alpha^2 + \beta^2 = 1$ is unique. Therefore, in the case that $v_k \ne 0$ and $w_k \ne 0$, $F(A_{(k)})$ intersects the interior of the line segment $\co{(\lambda_i,\lambda_j)}$ at a single point.
\end{remark}

%--------------
\begin{example}
If 
\[ U =
\begin{bmatrix}
    \frac{1}{2} & \frac{\sqrt{3}}{2}    & 0                     & 0                     \\
    \frac{1}{2} & -\frac{\sqrt{3}}{6}   & 0                     & \frac{\sqrt{6}}{3}    \\
    \frac{1}{2} & -\frac{\sqrt{3}}{6}   & \frac{\sqrt{2}}{2}    & -\frac{\sqrt{6}}{6}   \\
    \frac{1}{2} & -\frac{\sqrt{3}}{6}   & -\frac{\sqrt{2}}{2}    & -\frac{\sqrt{6}}{6}   \\
\end{bmatrix},
\]
then $U^\top U = I_4$. If $A = UDU^\top$, where $D = \diag{(-1-5i,-2,3-2i,2+5i)}$, then $A$ is normal. Figure \ref{fig:convexoid} illustrates \thref{main}. 

\begin{figure}[H]
\centering
\includegraphics[width=0.75\linewidth]{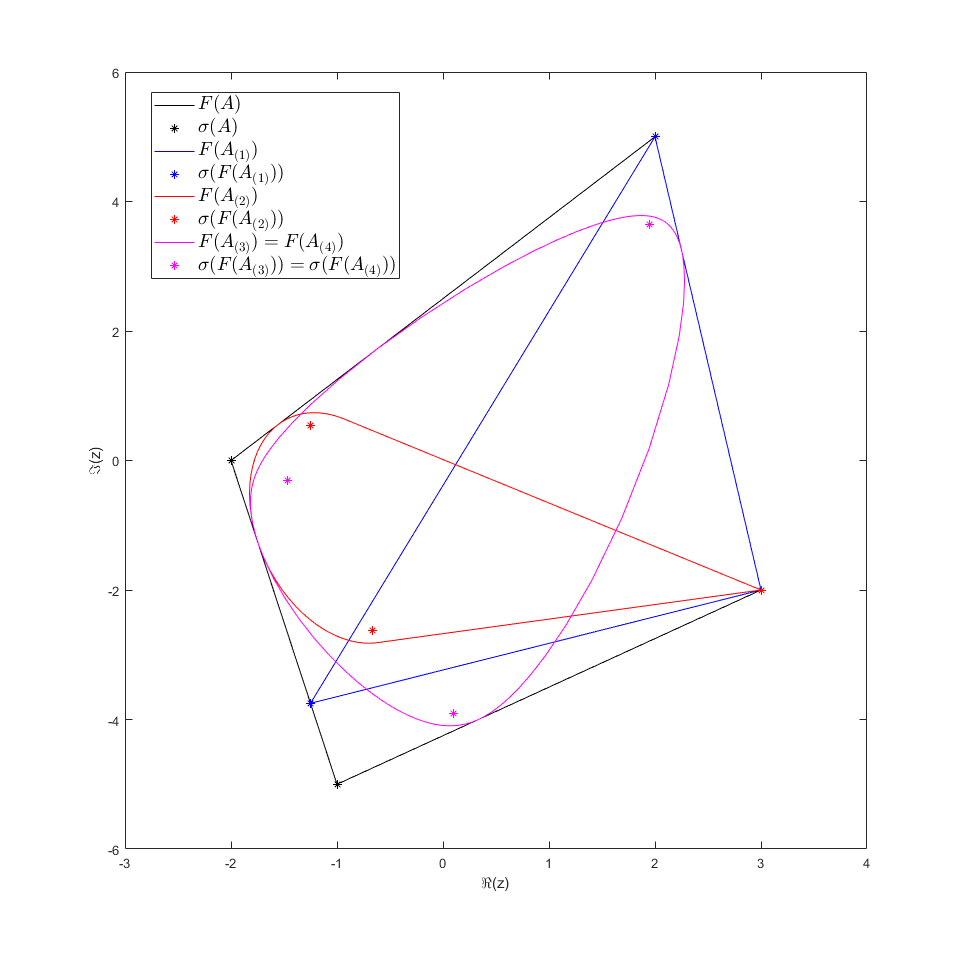}
\caption{An example generated via MATLAB illustrating \thref{main}.}
\label{fig:convexoid}
\end{figure}
\end{example}

%-----------------
\begin{corollary}
    \thlabel{dftcor}
        If $\lambda_1,\ldots,\lambda_n$ are complex numbers, $F$ denotes the $n$-by-$n$ discrete Fourier transform matrix, $D=\diag{(\lambda_1,\ldots,\lambda_n)}$, and $A = FDF^\ast$, then $F(A_{(k)})$ is inscribed in the polygon $\partial\co{(\lambda_1,\ldots,\lambda_n)}$ and the points of tangency occur at the midpoints of sides of $\partial\co{(\lambda_1,\ldots,\lambda_n)}$. 
\end{corollary}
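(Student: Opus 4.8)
The plan is to read the first assertion off \thref{main} and then to recover the location of the tangency points by specializing its proof to $U = F$. Indeed, the $n$-by-$n$ discrete Fourier transform matrix $F$ is unitary, so $A = FDF^\ast$ is normal with $\sigma(A) = \{\lambda_1,\ldots,\lambda_n\}$; hence $F(A) = \co(\lambda_1,\ldots,\lambda_n)$ by \thref{fvprops}\eqref{nrnormal}, so $\partial F(A) = \partial\co(\lambda_1,\ldots,\lambda_n)$, and \thref{main} applies directly to give that $F(A_{(k)})$ is inscribed in $\partial\co(\lambda_1,\ldots,\lambda_n)$ for every $k \in \{1,\ldots,n\}$. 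It remains only to locate the points of tangency.

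To do that I would run through the proof of \thref{main} with $U := F$. The one structural feature of the discrete Fourier transform matrix that is needed is that every entry of $F$ has modulus $1/\sqrt{n}$ (in the normalization $F = n^{-1/2}\bigl[\omega^{(p-1)(q-1)}\bigr]_{p,q=1}^{n}$, $\omega := \exp(2\pi i/n)$, which is unitary). Fix $k$, let $\lambda_i$ and $\lambda_j$ be adjacent vertices of $\partial\co(\lambda_1,\ldots,\lambda_n)$, and let $v := Fe_i$ and $w := Fe_j$ be the corresponding eigenvectors of $A$ (the $i$th and $j$th columns of $F$). Then $|v_k| = |w_k| = 1/\sqrt{n} \neq 0$, so the argument is always in case (2) of that proof — the degenerate case (1) never occurs — and applying \thref{rotevec} to arrange $v_k > 0$ and $w_k > 0$ multiplies $v$ and $w$ by scalars of modulus one, hence does not change $|v_k|$ or $|w_k|$; thus $v_k = w_k = 1/\sqrt{n}$.

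Consequently $v_k^2 + w_k^2 = 2/n$, so \eqref{alphabeta} gives $\alpha^2 = w_k^2/(v_k^2 + w_k^2) = 1/2 = \beta^2$ regardless of the choice of signs, and the point that the proof of \thref{main} places in $F(A_{(k)}) \cap \co(\lambda_i,\lambda_j)$ is $u^\ast A u = \alpha^2 \lambda_i + \beta^2 \lambda_j = \frac{1}{2}(\lambda_i + \lambda_j)$, the midpoint of the side $\co(\lambda_i,\lambda_j)$; by the remark following \thref{main} it is moreover the only point at which $F(A_{(k)})$ meets the relative interior of that side. This is exactly the asserted statement. I do not anticipate a real obstacle here: the corollary is essentially a transcription of \thref{main}, and the sole computational content is the equal-moduli property of the entries of $F$, which forces $\alpha^2 = \beta^2 = 1/2$; the only points requiring care are recording (and checking the unitarity of) the chosen normalization of $F$, and observing that \thref{rotevec}, being multiplication by a unit-modulus scalar, leaves every coordinate modulus — in particular $|v_k|$ and $|w_k|$ — unchanged.
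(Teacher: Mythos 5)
Your proposal is correct and follows the paper's own argument: both deduce the inscription from \thref{main} and then observe that every entry of the DFT matrix has modulus $1/\sqrt{n}$, so that after the normalization of \thref{rotevec} one gets $v_k = w_k = 1/\sqrt{n}$ and hence $\alpha^2 = \beta^2 = \tfrac{1}{2}$ in \eqref{alphabeta}, placing the tangency point at the midpoint $\tfrac{1}{2}(\lambda_i + \lambda_j)$. If anything, your write-up is slightly more explicit than the paper's (which records only the computation of $\alpha$ and $\beta$) in noting that case~(1) of the proof of \thref{main} never occurs and that the unit-modulus rotation preserves $|v_k|$ and $|w_k|$.
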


\begin{proof}
    Since 
    \[ f_{ij} = \frac{\omega^{(i-1)(j-1)}}{\sqrt{n}}, \]
    where $\omega := \exp(-2 \pi i/n)$, it follows that 
    \[ \alpha = \frac{-\frac{1}{\sqrt{n}}}{\sqrt{\frac{1}{n} + \frac{1}{n}}} = \frac{-\frac{1}{\sqrt{n}}}{\frac{\sqrt{2}}{\sqrt{n}}} = -\frac{1}{\sqrt{2}}, \]
    \[ \beta = \frac{\frac{1}{\sqrt{n}}}{\sqrt{\frac{1}{n} + \frac{1}{n}}} = \frac{\frac{1}{\sqrt{n}}}{\frac{\sqrt{2}}{\sqrt{n}}} = \frac{1}{\sqrt{2}}, \]
    and $\alpha^2 = \beta^2 = \frac{1}{2}$, where $\alpha$ and $\beta$ are defined as in \eqref{alphabeta}. 
\end{proof}

%--------------------------
\section*{Acknowledgements}

The authors thank the National Science Foundation for funding and the University of Washington Bothell for hosting \emph{REU Site: Tiling Theory, Knot Theory, Optimization, Matrix Analysis, and Image Reconstruction}. In addition, we thank Casey Mann and Milagros Loreto for their efforts.

\bibliographystyle{abbrv}
\bibliography{refs}

\end{document}